\newtheorem{theorem}{Theorem}[section]
\newtheorem{prop}[theorem]{Proposition}
\newtheorem{lemma}[theorem]{Lemma}
\newtheorem{corollary}[theorem]{Corollary}
\theoremstyle{definition}
\newtheorem{exam}[theorem]{Example}
\newtheorem{definition}[theorem]{Definition}
\newtheorem{problem}[theorem]{Problem}
\newtheorem{remark}[theorem]{Remark}
\begin{document}

\title{On Some Extensions of $\pi$-Regular Rings}

\author[P. Danchev]{Peter Danchev}
\address{Institute of Mathematics and Informatics, Bulgarian Academy of Sciences, 1113 Sofia, Bulgaria}
\email{danchev@math.bas.bg; pvdanchev@yahoo.com}
\author[A. Javan]{Arash Javan}
\address{Department of Mathematics, Tarbiat Modares University, 14115-111 Tehran Jalal AleAhmad Nasr, Iran}
\email{a.darajavan@modares.ac.ir; a.darajavan@gmail.com}
\author[A. Moussavi]{Ahmad Moussavi}
\address{Department of Mathematics, Tarbiat Modares University, 14115-111 Tehran Jalal AleAhmad Nasr, Iran}
\email{moussavi.a@modares.ac.ir; mpussavi.a@gmail.com}

\begin{abstract}
Some variations of $\pi$-regular and nil clean rings were recently introduced in \cite{5,8,7}, respectively. In this paper, we examine the structure and relationships between these classes of rings. Specifically, we prove that $(m, n)$-regularly nil clean rings are left-right symmetric and also show that the inclusions ($D$-regularly nil clean) $\subseteq$ (regularly nil clean) $\subseteq$ ($(m,n)$-regularly nil clean) hold, as well as we answer Questions 1, 2 and 3 posed in \cite{8}. Moreover, some other analogous questions concerning the symmetric properties of certain classes of rings are treated as well by proving that centrally Utumi rings are always strongly $\pi$-regular.
\end{abstract}

\maketitle

\section{Introduction and Preliminaries}

Throughout the present paper, all rings are assumed to be associative possessing an identity element. Our standard terminology and notations are mainly in agreement with those from \cite{17,18}. For example, the letters $U(R)$, $J(R)$, $Id(R)$, ${\rm Nil}(R)$ and $C(R)$ are stand to denote the set of units, the Jacobson radical, the set of idempotents, the set of nilpotents and the center of $R$, respectively. We also use the symbols $\mathbb{Z}$, $\mathbb{Z}_n$ and $\mathbb{N}$ to mean the set of integers, integers modulo $n$ and positive integers, respectively.

As usual, an element $r$ in a ring $R$ is called {\it regular} if there exists $x \in R$ such that $rxr = r$ or, equivalently, if the principal left ideal $Rr$ is a direct summand of $R$. The element $r$ is called {\it unit regular} if $rxr = r$ with $x \in U(R)$ or, equivalently, if $r = xe$ with $x \in U(R)$ and $e \in Id(R)$. Thus, a ring is called {\it (unit) regular} if every element of the ring is {\it (unit) regular}. If, however, $r = r^2x$ with $rx = xr$, these rings are known in the existing literature as {\it strongly regular} rings. Good sources for reviewing these three classes of rings are \cite{11} and \cite{23} as well as \cite{4}, \cite{13} and \cite{16}, respectively.

In that regard, as proper generalizations of the aforementioned concepts, it is worth mentioning that a ring $R$ is referred to as {\it $\pi$-regular} if, for each $r \in R$, there exists a positive integer $n$ such that $r^n \in r^n Rr^n$. Additionally, a ring $R$ is said to be {\it strongly $\pi$-regular} if, for every $r \in R$, there exists a positive integer $n$ such that $r^n \in r^{n+1} R \cap Rr^{n+1}$. The notion of strongly $\pi$-regular rings was initially introduced by Kaplansky in \cite{14}, and later these rings were studied by Azumaya in \cite{1} who referred to them as {\it right $\pi$-regular} rings. Moreover, Dischinger established in \cite{10} that the left-right symmetry of this notion is true. Consequently, every strongly $\pi$-regular ring is also $\pi$-regular, and the two concepts coincide in the case of commutative rings or, more generally, in the case of abelian rings (i.e., when all idempotents in the ring are central, that is, they lie in the center of the former ring). It is worth noticing that $\pi$-regularity has been extended in four different and non-trivial ways in the works of \cite{6,5,8,7}.

In the other vein, an element $r$ in a ring $R$ is known to be {\it clean} (see \cite{19}) if it can be expressed as the sum of an idempotent and a unit, that is, $r = e + u$, and so $R$ is called {\it clean} if every element in it is clean. If, in addition, $ue = eu$, then $R$ is called {\it strongly clean} (see \cite{20}).

Likewise, we remember that a ring $R$ is an exchange ring if, for each $a \in R$, there exists an idempotent $e \in Ra$ such that $1 - e \in R(1 - a)$ (see, for instance, \cite{19}). It was proved by Nicholson in
\cite[Theorem 2.1]{19} that a ring is an exchange ring if, and only if, all idempotents lift modulo every left ideal of the ring, and he also established the left-right symmetry of this concept. Furthermore, he also showed that every clean ring is an exchange ring. For a more concrete information regarding exchange and clean rings, we recommend the sources \cite{15} and \cite{20}, respectively.

On the same hand, an element $r$ in a ring $R$ is called {\it nil clean} if it can be expressed as the sum of an idempotent and a nilpotent, that is, $r = e + q$, and so $R$ is called {\it nil-clean} if every element in it is nil clean. If, in addition, $qe = eq$, then $R$ is called {\it strongly nil-clean} (see \cite{9}). It is straightforward to verify that nil clean rings are always clean, although the converse implication does not hold in general. This notion was further extended in \cite{6} to the concept of weakly nil clean rings that are those rings $R$ such that, for each $a \in R$, there exist $e \in Id(R)$ and $q \in {\rm Nil}(R)$ with $a - q - e \in eRa$.

On the other side, an element $r$ in a ring $R$ is called {\it Utumi} if $r-r^2x \in {\rm Nil}(R)$ for some $x \in R$, and thus $R$ is called {\it Utumi} if every element is Utumi. If, in addition, $rx = xr$, the ring $R$ is said to be {\it strongly Utumi}, whereas if $r - r^2x \in {\rm Nil}(R)\cap C(R)$, the ring $R$ is said to be {\it centrally Utumi}. It was established in \cite[Theorem 2.13]{5} that Utumi rings are necessarily left-right symmetric.

\medskip

We now need the following additional notions.

\begin{definition} \label{def 1}
Let R be an arbitrary ring.
\begin{enumerate}

\item

\cite[Definition 1.1]{5} -- A ring $R$ is called {\it regularly nil clean} if, for every $a \in R$, there exists $e\in Id(R) \bigcap Ra$ such that $a(1-e) = a-ae \in {\rm Nil}(R)$ (and hence that $(1-e)a \in {\rm Nil}(R)$).

\item

\medskip

\cite[Definition 1.1]{8} -- A ring $R$ is called {\it double regularly nil clean} or just {\it D-regularly nil clean} for short if, for each $a \in R$, there exists $e\in Id(R) \bigcap aRa$ such that $a(1-e) = a-ae \in {\rm Nil}(R)$ (and hence that $(1-e)a \in {\rm Nil}(R)$).

\item

\medskip

\cite[Definition 1.1]{7} -- A ring $R$ is called {\it $(m,n)$-regularly nil clean} for two non-negative integers $m,n$ if, for any $a \in R$, there exists $e\in Id(R) \bigcap a^mRa^n$ such that $a^m(1-e)a^n \in {\rm Nil}(R)$ (and hence that $a^n(1-e)a^m \in {\rm Nil}(R)$).

\end{enumerate}
\end{definition}

In this accent, in \cite[Problem 2.10]{7}, the author stated a question regarding the relationship between D-regularly nil clean rings and $(m, n)$-regularly nil clean rings. It was asked there of whether these concepts are independent of each other or {\it not}.

In what follows, we will demonstrate that, for every $m,n \in \mathbb{N}$, the implications are fulfilled:

\medskip

\begin{center}
$D$-regularly nil clean $\Longrightarrow$ regularly nil clean $\Longrightarrow$ $(m,n)$-regularly nil clean.
\end{center}

\medskip

However, when the ring is either abelian or NI, where in the latter situation ${\rm Nil}(R)$ forms an ideal of $R$, for any $m,n \in \mathbb{N}$ we have that:

\medskip

\begin{center}
D-regularly nil clean $\Longleftrightarrow$ regularly nil clean $\Longleftrightarrow$ $(m,n)$-regularly nil clean.
\end{center}

\medskip

The systematization of our further work is as follows: In the next section, we deal with the transversal between the so-called regularly nil clean and $(m,n)$-regularly nil clean rings for some non-negative integers $m$ and $n$. The main results here are Theorems~\ref{2-primal} and \ref{utumi}, respectively. In the subsequent third section, we are concerned with the connection between the so-termed regularly nil clean and D-regularly nil clean rings as here we succeeded to establish two major results that are, respectively, Theorems~\ref{Th3.3} and \ref{Th3.4}.

\section{The relationship between regularly nil clean and $(m,n)$-regularly nil clean rings}

We begin our work with the following technicality.

\begin{prop}
Suppose $m,n\in \mathbb{N}$. Then, $(m,n)$-regularly nil clean rings $R$ are left-right symmetric in the sense that, for any $a \in R$, there exists $f \in Id(R) \cap a^nRa^m$ with $a^n(1 - f)a^m \in {\rm Nil}(R)$.
\end{prop}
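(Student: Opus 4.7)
The plan is to exhibit, from the idempotent $e = a^m s a^n$ supplied by the hypothesis, an explicit element $f \in a^n R a^m$ satisfying the idempotency and nilpotent conditions in the mirrored positions. Setting $b := a^{m+n}$, the first step is to record the algebraic identity that $e^2 = e$ forces. Expanding $e^2 = e$ yields $a^m(sbs - s) a^n = 0$, and sandwiching with $a^n$ on the left and $a^m$ on the right collapses this to the clean relation $bsbsb = bsb$ inside $R$. From here an easy induction delivers the stabilization rules $(sb)^k = (sb)^2$ and $(bs)^k = (bs)^2$ for every $k \geq 2$, together with $b(sb)^k = bsb = (bs)^k b$ for every $k \geq 1$.

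The central construction is then to take $r := sbsbs = s(bs)^2 = (sb)^2 s$ and define $f := a^n r a^m$, which visibly lies in $a^n R a^m$. Since $f^2 = a^n r \cdot a^{m+n} \cdot r a^m = a^n(rbr) a^m$, idempotency of $f$ reduces to checking $rbr = r$. Direct expansion gives $rbr = s(bs)^5$, which collapses via $(bs)^5 = (bs)^2$ to $s(bs)^2 = sbsbs = r$. Hence $f^2 = f$, so $f \in \mathrm{Id}(R) \cap a^n R a^m$.

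For the nilpotent condition, I compute $a^m f a^n = a^{m+n} r a^{m+n} = brb$, and $brb = b(sb)^3 = bsb$ by the stabilization. Therefore $a^m(1 - f) a^n = b - bsb$. But the very same expression $b - bsb$ also equals $a^n(1 - e) a^m$, which is nilpotent by the parenthetical consequence built into the definition (applied to $e$). Invoking that same parenthetical implication once more, now for the idempotent $f$, upgrades this to $a^n(1 - f) a^m \in \mathrm{Nil}(R)$, completing the argument.

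The main obstacle is discovering the correct $r$. Naive candidates such as $r = s$ or $r = sbs$ do \emph{not} satisfy $rbr = r$, so the associated $f = a^n r a^m$ fails to be idempotent. The key insight is that the identity $bsbsb = bsb$ already makes $(sb)^2$ an idempotent of $R$, and the minimal enrichment $r = s(bs)^2$ inherits just enough stabilization for $rbr = r$ to hold on the nose, while still having the correct shape to place $f$ inside $a^n R a^m$ and to make $a^m f a^n$ coincide with $a^n e a^m$.
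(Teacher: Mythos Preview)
Your proof is correct and follows the same overall strategy as the paper (manufacture $f$ explicitly from the given $e=a^msa^n$), but the execution differs in a noteworthy way. The paper assumes without loss of generality that $m<n$, sets $f:=(a^nsa^m)^2$, and verifies idempotency by repeatedly using the factorization $a^n=a^{n-m}a^m$ to regroup $(a^nsa^m)^4$ as $a^{n-m}e^3(a^msa^m)$. You instead extract from $e^2=e$ the single relation $bsbsb=bsb$ with $b=a^{m+n}$, derive the stabilization $(bs)^k=(bs)^2$, and take $f=a^n(sbsbs)a^m$; idempotency then reduces to the identity $rbr=r$, which holds on the nose. The payoff of your route is that it never needs the case split $m<n$ and works symmetrically in $m,n$; the paper's route yields a slightly shorter $f$ but at the cost of that WLOG reduction. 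One small caveat: in your closing commentary you assert that the candidate $r=sbs$ makes $f$ fail to be idempotent. In fact $r=sbs$ is exactly the paper's choice (since $(a^nsa^m)^2=a^n sbs\,a^m$), and the resulting $f$ \emph{is} idempotent; what is true is only that $rbr=r$ fails for $r=sbs$, so your particular verification method does not apply to it. This does not affect the validity of your argument.
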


\begin{proof}
In view of Definition \ref{def 1} above, given that $e \in a^mRa^n$ such that $a^n(1-e)a^m \in {\rm Nil}(R)$. Without loss of generality, we may assume that $m<n$. Write $e=a^mra^n$, and set $f :=(a^nra^m)^2$. Thus,

\begin{align*}
  f^2 & = (a^nra^m)(a^nra^m)(a^nra^m)(a^nra^m)  \\
      & = a^{n-m}(a^mra^n)(a^mra^n)(a^mra^n)(a^mra^m) \\
      & = a^{n-m}(a^mra^n)(a^mra^m)=(a^nra^m)(a^nra^m)=f.
\end{align*}

\noindent Also,

$$fa^n=(a^nra^m)(a^nra^m)a^n=a^{n-m}(a^mra^n)(a^mra^n)a^m=a^{n-m}ea^m.$$ This allows us to infer that
$$a^m(1-f)a^n= a^m(a^n-fa^n)= a^m(a^n-a^{n-m}ea^m)=$$

$$=a^na^m-a^nea^m=a^n(1-e)a^m \in {\rm Nil}(R).$$ Therefore, the relation $a^n(1-f)a^m \in {\rm Nil}(R)$ follows now immediately.
\end{proof}

Our next statement we are planning to prove is the following.

\begin{prop} \label{mn regular}
Regularly nil clean rings are $(m,n)$-regularly nil clean for every $m,n \in \mathbb{N}$.
\end{prop}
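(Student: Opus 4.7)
The plan is to promote the one-sided idempotent supplied by the regularly nil clean hypothesis into an idempotent lying inside the two-sided corner $a^m R a^n$. The natural vehicle is a Fitting/strongly clean decomposition, which regularly nil clean rings admit by the structural results of \cite{5}.

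First, I would apply Definition~\ref{def 1}(1) to the element $a$ to obtain an idempotent $e_0 = sa \in Ra$ with $a(1-e_0) \in \mathrm{Nil}(R)$. Invoking the fact from \cite{5} that every regularly nil clean ring is strongly $\pi$-regular, the classical Azumaya--Fitting adjustment lets me replace $e_0$ by an idempotent $e$ that commutes with $a$, still satisfies $a(1-e) \in \mathrm{Nil}(R)$, and makes $ae$ a unit in the corner ring $eRe$. Fix $b \in eRe$ inverse to $ae$ there, so that $aeb = bae = e$ and $b = eb = be$.

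Next, I would iterate to pull out powers of $a$ from both sides. Since $ae = ea$ and $b \in eRe$, a straightforward induction yields $(ae)^k = a^k e$ and hence
\[
a^k b^k \;=\; (ae)^k b^k \;=\; e, \qquad b^k a^k \;=\; e \qquad (k \geq 1).
\]
Choosing any $k \geq \max(m,n)$,
\[
e \;=\; e \cdot e \;=\; (a^k b^k)(b^k a^k) \;=\; a^m \bigl(a^{k-m}\, b^{2k}\, a^{k-n}\bigr)\, a^n \;\in\; a^m R a^n,
\]
which secures the membership required by Definition~\ref{def 1}(3).

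Finally, since $e$ commutes with $a$, so does $1-e$, and using $(1-e)^{m+n} = 1-e$,
\[
a^m (1-e) a^n \;=\; a^{m+n} (1-e) \;=\; \bigl(a(1-e)\bigr)^{m+n},
\]
a power of the nilpotent element $a(1-e)$, and therefore nilpotent. Taking $f := e$ completes the verification. The principal obstacle is the opening move---upgrading the non-commuting idempotent $e_0$ from the definition to a commuting one $e$---which I would quote from \cite{5}; a self-contained derivation would essentially reprise the standard strongly $\pi$-regular construction of Azumaya, adjusting $e_0$ by a polynomial in $a$ and $e_0$ modulo the nilpotent $a(1-e_0)$.
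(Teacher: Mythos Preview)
Your opening move is the problem: regularly nil clean rings are \emph{not} strongly $\pi$-regular in general, so the Azumaya--Fitting adjustment you invoke from \cite{5} is simply unavailable. A concrete obstruction is $R = \mathrm{End}_F(V)$ for an infinite-dimensional vector space $V$. This ring is von Neumann regular, hence regularly nil clean by \cite[Proposition~2.1]{5}, but the unilateral shift $a\colon e_i \mapsto e_{i+1}$ admits no Fitting decomposition whatsoever: if an idempotent $e$ commuted with $a$, with $ae$ invertible in $eRe$ and $a(1-e)$ nilpotent, then $a$ would restrict to a nilpotent \emph{injective} map on $(1-e)V$, forcing $(1-e)V = 0$, hence $e = 1$ and $a$ invertible in $R$ --- a contradiction. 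So for this $a$ the commuting idempotent your argument requires does not exist, and the proof collapses at the first step. (More broadly, regularly nil clean is a proper generalisation of $\pi$-regular, which already fails to imply strongly $\pi$-regular.)

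The paper avoids this entirely by applying the regularly nil clean hypothesis not to $a$ but to the power $a^{m+n}$. One obtains $e = a^{m+n}r$ with $(1-e)a^{m+n}$ nilpotent; after the normalisation $r' := re$ (so that $e = a^{m+n}r'$ and $r' = r'e$), the element $f := a^m r' a^n$ is checked by a two-line computation to be an idempotent in $a^m R a^n$ satisfying $a^n f = e a^n$, whence $a^n(1-f)a^m = (1-e)a^{m+n} \in \mathrm{Nil}(R)$. No commutation, no corner inverses, no structural theorem about the ring is needed --- the argument is purely elementwise.
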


\begin{proof}
Suppose that $R$ is a regularly nil clean ring. Let $a \in R$ be an arbitrary element and set $k:=n+m$, so $a^k \in R$. By definition, there is $e\in a^kR$ such that $(1-e)a^k \in {\rm Nil}(R)$. Write $e= a^kr$ for some $r \in R$, and put $r':=re$ and $f:=a^mr'a^n$. Thus, $e= a^kr'$, $r'=r'e$ and $$f^2= a^mr'a^kr'a^n= a^mr'ea^n= a^mr'a^n= f.$$ Also, $$a^nf=a^na^mr'a^n=a^kr'a^n=ea^n.$$ Then, we have $$a^n(1-f)a^m= (a^n-a^nf)a^m= (a^n-ea^n)a^m= (1-e)a^k \in {\rm Nil}(R).$$ Therefore, $a^m(1 - f)a^n \in {\rm Nil}(R)$, and so the result follows.
\end{proof}

We are now prepared to establish the following assertion.

\begin{prop}
Let $R$ be a ring and $m,n,p,q \in \mathbb{N} \cup \{0\}$ with $m+n = p+q$. Then, $R$ is an $(m, n)$-regularly nil clean ring if, and only if, $R$ is a $(p,q)$-regularly nil clean.
\end{prop}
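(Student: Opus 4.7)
The plan is to establish the equivalence by reducing both $(m, n)$- and $(p, q)$-regularly nil clean to the single condition $(0, k)$-regularly nil clean (where $k = m+n = p+q$), and then invoke transitivity. It therefore suffices to prove, for all nonnegative $m, n$ with $m+n = k$, that $R$ is $(m, n)$-regularly nil clean if and only if $R$ is $(0, k)$-regularly nil clean.

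The direction $(0, k) \Rightarrow (m, n)$ is essentially the content of the proof of Proposition \ref{mn regular}. Given $e \in Id(R) \cap R a^k$ with $(1-e) a^k \in {\rm Nil}(R)$, I would write $e = r a^k$, set $r' := er$ (so that $r' a^k = e$ and $e r' = r'$), and take $f := a^m r' a^n$. The verification $f^2 = a^m r' a^k r' a^n = a^m r' a^n = f$ shows $f$ is idempotent in $a^m R a^n$, and the identity $a^n(1-f) a^m = (1-e) a^k \in {\rm Nil}(R)$ transfers to $a^m(1-f) a^n \in {\rm Nil}(R)$ by the same argument as there.

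For the harder direction $(m, n) \Rightarrow (0, k)$, starting from the given idempotent $e = a^m r a^n$ with $a^m(1-e) a^n \in {\rm Nil}(R)$, I plan to apply the semigroup identity ``if $st$ is idempotent then $(ts)^2$ is idempotent''. Taking $s := a^m$ and $t := r a^n$, so $st = e$, yields the candidate
\[
g := (ts)^2 = (r a^n \cdot a^m)^2 = (r a^k)^2 = r a^k r a^k \in Id(R) \cap R a^k.
\]
The companion identity $s(ts)^2 = es$, a direct manipulation using $st = e$, gives $a^m g = e a^m = a^m r a^k$, hence $a^m g a^n = e a^k$ and therefore
\[
a^m(1-g) a^n = a^k - e a^k = (1-e) a^k.
\]
To conclude I will invoke a nilpotency transfer: from $[a^m(1-e) a^n]^N = 0$, the identity $a^n [a^m(1-e) a^n]^N a^m = [a^k(1-e)]^N a^k$ forces $[a^k(1-e)]^{N+1} = 0$, hence $a^k(1-e) \in {\rm Nil}(R)$ and, by idempotency of $1-e$, also $(1-e) a^k \in {\rm Nil}(R)$. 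Thus $a^m(1-g) a^n \in {\rm Nil}(R)$, and applying the same transfer once more with $g$ in place of $e$ produces $(1-g) a^k \in {\rm Nil}(R)$, witnessing the $(0, k)$-regularly nil clean property for $g$.

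The only delicate step is the nilpotency transfer, which hinges on the fusion of adjacent $a^n \cdot a^m$ factors into $a^k$ inside the alternating nilpotent product; beyond that, the construction of $g$ via the semigroup identity and the calculation $a^m g a^n = e a^k$ are direct computations that fall out of the $s,t$ factorization of $e$.
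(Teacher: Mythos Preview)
Your argument is correct. The nilpotency transfer you flag as delicate is exactly the standard maneuver the paper also relies on (it opens its own proof by passing from $a^m(1-e)a^n\in{\rm Nil}(R)$ to $(1-e)a^{m+n}\in{\rm Nil}(R)$), and your explicit computation $a^n[a^m(1-e)a^n]^N a^m=[a^k(1-e)]^N a^k$ justifies it cleanly.

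The route, however, is organized differently from the paper's. The paper does not pass through a canonical $(0,k)$ case: it goes directly from $(m,n)$ to $(p,q)$ by assuming (say) $m>p$ and setting $f:=a^p r a^{p+q} r a^q\in a^pRa^q$, then verifying $f^2=f$ and $a^{m-p}(1-f)a^{n+p}=(1-e)a^{m+n}$ by hand. Your reduction to $(0,k)$ and back is conceptually tidier---it makes transparent that only the sum $m+n$ matters---and your use of the semigroup fact ``$st$ idempotent $\Rightarrow (ts)^2$ idempotent'' with $s=a^m$, $t=ra^n$ renders the idempotency of $g$ immediate rather than a four-line expansion. Note that the two constructions in fact coincide: specializing the paper's $f$ to $p=0$, $q=k$ gives precisely your $g=(ra^k)^2$. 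So the underlying idempotent is the same; what differs is that you factor the equivalence through a hub, while the paper builds a single direct bridge.
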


\begin{proof}
Suppose that $e \in a^mRa^n$ such that $(1-e)a^{m+n} \in {\rm Nil}(R)$. Without loss of generality, we assume that $m>p$. Write $e=a^mra^n$, and define $f := a^pra^{p+q}ra^q \in a^pRa^q$. Then,

\begin{align*}
  f^2 & = (a^pra^{p+q}ra^q)(a^pra^{p+q}ra^q) = (a^pra^{m+n}ra^q)(a^pra^{m+n}ra^q)  \\
      & = a^pra^n(a^mra^n)(a^mra^n)(a^mra^n)a^{q-n} = a^pra^n(a^mra^n)a^{q-n}\\
      & = a^pra^{n+m}ra^q = a^pra^{p+q}ra^q = f.
\end{align*}

\noindent However, we can see that $a^{m-p}f = ea^{q-n}$. Also, $$a^{m - p}(1 - f)a^{n+p} = (a^{m-p} - a^{m-p}f)a^{n+p} = (a^{m-p} - ea^{q-n})a^{n+p} =$$

$$=a^{m+n} - ea^{p+q} =  a^{m+n} - ea^{m+n} = (1-e)a^{m+n} \in {\rm Nil}(R).$$ So, $$a^{m - p}(1 - f)a^{n+p} \in {\rm Nil}(R),$$ but $$a^p(1-f)a^q \in {\rm Nil}(R)$$ if, and only if, $$a^{p+q}(1-f) \in {\rm Nil}(R)$$ if, and only if, $$a^{m+n}(1-f) \in {\rm Nil}(R)$$ if, and only if, $$a^{m+n+p-p}(1-f) \in {\rm Nil}(R)$$ if, and only if, $$a^{m-p}(1-f)a^{n+p} \in {\rm Nil}(R).$$ Consequently, $R$ is a $(p, q)$-regularly nil clean ring, as asserted.
\end{proof}

As a consequence, we derive:

\begin{corollary}
Let $R$ be a ring and $m,n \in \mathbb{N} \cup \{0\}$. The following conditions are equivalent:
 \begin{enumerate}

\item[\textsc{1.}] $R$ is an $(m, n)$-regularly nil clean ring.

\item[\textsc{2.}] For every $a \in R$, there exists $e=e^2 \in a^{n+m}R$ such that $a^{n+m}(1-e) \in {\rm Nil}(R)$.

\item[\textsc{3.}] For every $a \in R$, there exists $e=e^2 \in Ra^{n+m}$ such that $a^{n+m}(1-e) \in {\rm Nil}(R)$.

\item[\textsc{4.}] For every $a \in R$, $a^{n+m}$ is a regularly nil clean element.
\end{enumerate}
\end{corollary}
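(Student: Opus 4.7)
The plan is to obtain the corollary as an essentially immediate consequence of the preceding Proposition, which already establishes that $(m,n)$-regularly nil clean is equivalent to $(p,q)$-regularly nil clean whenever $m+n = p+q$. Each of the conditions (2), (3), (4) is simply a convenient repackaging of a boundary case of the $(m,n)$-notion where one exponent has been pushed all the way to zero.

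First, for $(1) \Leftrightarrow (2)$, I would apply the previous Proposition with the choice $(p,q) = (m+n, 0)$. Since $a^0 = 1$, unwinding the definition of $(m+n, 0)$-regularly nil clean produces an idempotent $e \in a^{m+n}R$ together with $a^{m+n}(1-e) \in {\rm Nil}(R)$, which is precisely condition (2). The equivalence $(1) \Leftrightarrow (3)$ is completely symmetric: apply the Proposition with $(p,q) = (0, m+n)$, so that the idempotent now lies in $Ra^{m+n}$ and the nilpotency condition reads $a^{m+n}(1-e) \in {\rm Nil}(R)$, which is condition (3).

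For $(3) \Leftrightarrow (4)$, there is nothing to prove beyond unfolding Definition \ref{def 1}(1): the element $a^{m+n}$ is regularly nil clean in the element-wise sense exactly when there is some $e \in Id(R) \cap R a^{m+n}$ with $a^{m+n}(1-e) \in {\rm Nil}(R)$, which is a verbatim restatement of (3). So these two conditions are definitionally identical.

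I do not anticipate any real obstacle, since the content has already been absorbed into the earlier Proposition; the only mild point of care is the degenerate boundary case $m = n = 0$, for which all four conditions reduce to the trivial statement obtained by taking $e = 1$. Thus the corollary should be phrased as a short deduction, citing the Proposition for $(1) \Leftrightarrow (2)$ and $(1) \Leftrightarrow (3)$, and noting that $(3) \Leftrightarrow (4)$ is the definition.
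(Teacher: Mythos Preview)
Your proposal is correct and matches the paper's intent: the paper presents this Corollary without proof, simply as an immediate consequence of the preceding Proposition (the $(m,n)\Leftrightarrow(p,q)$ equivalence when $m+n=p+q$), and your derivation via $(p,q)=(m+n,0)$ and $(p,q)=(0,m+n)$ is exactly the intended reading. The only cosmetic point is that unwinding $(0,m+n)$-regularly nil clean literally gives $(1-e)a^{m+n}\in{\rm Nil}(R)$ rather than $a^{m+n}(1-e)\in{\rm Nil}(R)$, but these are equivalent via the standard fact $xy\in{\rm Nil}(R)\Leftrightarrow yx\in{\rm Nil}(R)$, which the paper's Definition already builds in with its parenthetical ``and hence'' clause.
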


The following technical claim is useful. Recall that a ring $R$ is said to be {\it reduced}, provided ${\rm Nil}(R)=\{0\}$, and is said to be {\it abelian}, provided ${\rm Id}(R)\subseteq C(R)$. Notice that reduced rings are always abelian and this implication is generally irreversible.

\begin{lemma} \label{mn rgular is regular}
Let $R$ be a reduced $(m,n)$-regularly nil clean ring such that $(m,n)\neq (0,0)$. Then, $R$ is a regular ring.
\end{lemma}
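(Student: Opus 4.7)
The plan is to reduce, for each $a \in R$, the conclusion of the $(m,n)$-regularly nil clean property down to the single von Neumann equation $a = axa$. Set $k := m+n$, which is at least $1$ by hypothesis. Applying the preceding Corollary, I extract for each $a \in R$ an idempotent $e \in Ra^k$ with $(1-e)a^k \in \mathrm{Nil}(R)$. Since $R$ is reduced, $\mathrm{Nil}(R) = \{0\}$, so this becomes the crisp equality $a^k = ea^k$. Writing $e = sa^k$ for some $s \in R$ records the witness I will use at the end.

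Next, I would exploit that reduced rings are abelian, so $e \in C(R)$; hence $a^k(1-e) = 0$ as well. The key maneuver is then the central-idempotent trick: because $1-e$ is a central idempotent it commutes with $a$ and satisfies $(1-e)^k = 1-e$, so
\[
(a(1-e))^k \;=\; a^k(1-e)^k \;=\; a^k(1-e) \;=\; 0.
\]
Reducedness forces $a(1-e) = 0$, that is, $a = ae$. This step is the heart of the argument: it drops the exponent from $k$ to $1$ without needing any delicate ``$a^n b = 0 \Rightarrow ab = 0$'' lemma, trading it for the centrality of $e$.

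Finally, substituting $e = sa^k$ yields $a = ae = asa^k$, and by pure associativity $asa^k = a\bigl(sa^{k-1}\bigr)a$, so $a = axa$ with $x := sa^{k-1}$. Since $a \in R$ was arbitrary, $R$ is (von Neumann) regular, as required.

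The only real obstacle is the exponent reduction in the middle paragraph; once $e$ is shown to be a central idempotent, the computation $(a(1-e))^k = a^k(1-e)$ together with reducedness dispatches it cleanly. The hypothesis $(m,n) \neq (0,0)$ is used exactly to ensure $k \geq 1$, so that the idempotent $e \in Ra^k$ produced by the Corollary carries enough powers of $a$ to extract the regularity witness $sa^{k-1}$.
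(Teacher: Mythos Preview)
Your proof is correct and follows essentially the same route as the paper's: both use that reduced rings are abelian to make $e$ central, compute $(a(1-e))^{m+n}=a^{m+n}(1-e)=0$, invoke reducedness to get $a=ae$, and then read off $a=ara$ from $e\in Ra$. The only cosmetic difference is that the paper works directly from the definition (taking $e\in a^nRa^m\subseteq Ra$ after assuming WLOG $m\ge n$), whereas you route through the preceding Corollary to get $e\in Ra^{m+n}$; the substance is identical.
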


\begin{proof}
Assume $a \in R$ with $m\geq n$ (and hence $m>0$). Thus, there is $e \in a^nRa^m \subseteq Ra$ such that $a^{n+m}(1-e) \in \text{Nil}(R)=\{0\}$. Since $R$ is abelian, we deduce $(a(1-e))^{n+m}=0$, so $a(1-e)=0$. Therefore, $a=ara$ for some $r \in R$, as promised.
\end{proof}

We now can attack the following basic statement which focuses on some common extensions of commutative rings. In it, in defining a $2$-primal ring, one usually means a ring  $R$ such that the set of nilpotent elements ${\rm Nil}(R)$ equals the prime radical (that is, the intersection of prime ideals). In this direction, the symbol ${\rm Nil}_{*}(R)$ stands for the {\it lower} nil-radical of such a ring $R$.

\begin{theorem}\label{2-primal}
Let $R$ be a $2$-primal $(m,n)$-regularly nil clean ring for some $m,n \in \mathbb{N} \cup \{0\}$ such that $(m,n)\neq (0,0)$. Then, ${\rm M}_k(R)$ is a $(p,q)$-regularly nil clean ring for every $p,q \in \mathbb{N} \cup \{0\}$ and $k \in \mathbb{N}$.
\end{theorem}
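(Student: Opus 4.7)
The plan is to reduce modulo the nil radical and then lift back. Since $R$ is $2$-primal, ${\rm Nil}(R)={\rm Nil}_*(R)$ is a nil ideal and $\bar R:=R/{\rm Nil}(R)$ is reduced. The $(m,n)$-regularly nil clean property descends to $\bar R$ because idempotents (resp.\ nilpotents) in $R$ project to idempotents (resp.\ nilpotents) in $\bar R$; the distinguished idempotent $e\in a^m R a^n$ for $a\in R$ gives $\bar e\in\bar a^m\bar R\bar a^n$ with $\bar a^m(1-\bar e)\bar a^n$ nilpotent. Since $(m,n)\neq(0,0)$, Lemma~\ref{mn rgular is regular} applies and yields that $\bar R$ is (von Neumann) regular; being also reduced, $\bar R$ is strongly regular, hence ${\rm M}_k(\bar R)$ is regular. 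Moreover, the $2$-primal hypothesis gives the standard identification ${\rm Nil}_*({\rm M}_k(R))={\rm M}_k({\rm Nil}_*(R))={\rm M}_k({\rm Nil}(R))$, which is therefore a nil ideal of ${\rm M}_k(R)$.

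Next, fix $A\in {\rm M}_k(R)$; the aim is to verify condition~3 of the preceding Corollary, namely to produce an idempotent $E\in {\rm M}_k(R)A^{p+q}$ with $A^{p+q}(1-E)$ nilpotent. Using regularity of ${\rm M}_k(\bar R)$, I would choose $\bar X\in {\rm M}_k(\bar R)$ with $\bar A^{p+q}\bar X\bar A^{p+q}=\bar A^{p+q}$, lift $\bar X$ to some $X\in {\rm M}_k(R)$, and set $E_0:=XA^{p+q}\in {\rm M}_k(R)A^{p+q}$. By construction, both $A^{p+q}-A^{p+q}E_0$ and $E_0^2-E_0$ project to $0$ in ${\rm M}_k(\bar R)$, hence lie in the nil ideal ${\rm M}_k({\rm Nil}(R))$ and so are nilpotent.

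The remaining task is to upgrade $E_0$ to a genuine idempotent without leaving the one-sided ideal ${\rm M}_k(R)A^{p+q}$. For this I would iterate $E_{i+1}:=3E_i^{2}-2E_i^{3}$ starting from $E_0$; a short computation (using that $E_i$ commutes with the ``error'' $E_i^{2}-E_i$) gives $E_{i+1}^{2}-E_{i+1}=O((E_i^{2}-E_i)^2)$, so finitely many steps produce an honest idempotent $E=p(E_0)$ for some $p\in\mathbb{Z}[x]$ with $p(0)=0$. Each monomial $E_0^{\,j}$ with $j\geq 1$ still belongs to ${\rm M}_k(R)A^{p+q}$ (since $E_0$ does), so $E\in {\rm M}_k(R)A^{p+q}$ and $E-E_0\in {\rm M}_k({\rm Nil}(R))$. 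Consequently $A^{p+q}(1-E)=A^{p+q}(1-E_0)+A^{p+q}(E_0-E)\in {\rm M}_k({\rm Nil}(R))$ is nilpotent, which by the Corollary confirms that ${\rm M}_k(R)$ is $(p,q)$-regularly nil clean. The case $(p,q)=(0,0)$ is trivial since then $A^{p+q}=1$ and $E=1$ works.

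The main technical hurdle is the simultaneous requirement that the lifted idempotent be idempotent on the nose and remain inside the one-sided ideal generated by $A^{p+q}$; the fact that the Newton-style lift is a polynomial in $E_0$ with vanishing constant term is exactly what reconciles these two demands. A secondary delicate point is the identification ${\rm Nil}_*({\rm M}_k(R))={\rm M}_k({\rm Nil}_*(R))$, for which the $2$-primal hypothesis is indispensable: without it, ${\rm M}_k({\rm Nil}(R))$ need not be a nil ideal, by K\"othe-type obstructions.
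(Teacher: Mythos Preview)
Your proof is correct and follows the same overall route as the paper: reduce modulo ${\rm Nil}_*(R)$, use Lemma~\ref{mn rgular is regular} to see that $\bar R$ (hence ${\rm M}_k(\bar R)$) is regular, and then lift back through the nil ideal ${\rm M}_k({\rm Nil}_*(R))={\rm Nil}_*({\rm M}_k(R))$. The only difference is packaging: the paper cites \cite[Proposition~2.1]{5} and Proposition~\ref{mn regular} to pass from regular to $(p,q)$-regularly nil clean and then invokes \cite[Lemma~2.1]{7} for the lift, whereas you carry out the idempotent-lifting explicitly via the standard polynomial iteration $E\mapsto 3E^2-2E^3$, which makes your version self-contained but is otherwise the same argument.
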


\begin{proof}
We know that (see, for more details, \cite{17,18})
$${\rm M}_k(R)/J({\rm M}_k(R)) \cong {\rm M}_k(R/J(R))={\rm M}_k(R/\text{Nil}_{*}(R)).$$
Since the quotient $\overline{R}=R/\text{Nil}_{*}(R)$ is simultaneously reduced and $(m, n)$-regularly nil clean, Lemma \ref{mn rgular is regular} ensures that $\overline{R}$ is regular. Therefore, \cite{17} assures that ${\rm M}_k(\overline{R})$ is regular. Furthermore, combining \cite[Proposition 2.1]{5} and Proposition \ref{mn regular}, one sees that ${\rm M}_k(\overline{R})$ is a $(p, q)$-regularly nil clean ring.

On the other hand, since $J(R)$ is a nil-ideal and $R$ is $2$-primal, one verifies that $$J({\rm M}_k(R))={\rm M}_k(J(R))={\rm M}_k(\text{Nil}_{*}(R))=\text{Nil}_{*}({\rm M}_k(R))$$ is nil. Consequently, \cite[Lemma 2.1]{7} insures that ${\rm M}_k(R)$ is $(p, q)$-regularly nil clean, as claimed.
\end{proof}

The next two technical claims are helpful.

\begin{lemma}
Let $R$ be an abelian $(m,n)$-regularly nil clean ring for some $m,n\in \mathbb{N}\cup \{0\}$ such that $(m,n)\neq (0,0)$. Then, $R$ is $(p,q)$-regularly nil clean for every $p,q \in \mathbb{N}\cup \{0\}$.
\end{lemma}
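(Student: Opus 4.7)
The plan is to reduce the claim, via the preceding Corollary, to a statement about single elements: $R$ is $(p,q)$-regularly nil clean if and only if, for every $b \in R$, there exists $f = f^2 \in b^{p+q}R$ with $b^{p+q}(1-f) \in {\rm Nil}(R)$. Setting $s := p+q$, we must produce such an idempotent $f$ attached to the power $b^s$.

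The case $s = 0$ is trivial (take $f = 1$, noting $1 - 1 = 0$). For $s \geq 1$, the key move is to apply the $(m,n)$-hypothesis not to $b$ but to the higher power $a := b^{s}$. The Corollary then yields an idempotent $e \in (b^{s})^{m+n}R = b^{s(m+n)}R$ such that $b^{s(m+n)}(1-e) \in {\rm Nil}(R)$. Since $(m,n) \neq (0,0)$ forces $m+n \geq 1$, we obtain $s(m+n) \geq s$ and hence $e \in b^{s(m+n)}R \subseteq b^{s}R$, so the membership half of the required condition is immediate upon setting $f := e$.

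It remains to verify that $b^{s}(1-e) \in {\rm Nil}(R)$, and this is precisely where the abelian hypothesis does the work. Because every idempotent is central, $1-e$ commutes with every element of $R$ and is itself idempotent, so for every $N \geq 1$ we have the collapse
\[
\bigl(b^{s}(1-e)\bigr)^{N} \;=\; b^{sN}(1-e)^{N} \;=\; b^{sN}(1-e).
\]
The same centrality/idempotence collapse applied to the nilpotent element $b^{s(m+n)}(1-e)$, with nilpotency index say $T$, yields $b^{s(m+n)T}(1-e) = 0$. Taking $N := (m+n)T$ then gives
\[
b^{sN}(1-e) \;=\; b^{\,sN - s(m+n)T}\cdot b^{s(m+n)T}(1-e) \;=\; 0,
\]
so $b^{s}(1-e) \in {\rm Nil}(R)$. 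Combined with the previous paragraph and the Corollary, this shows that $R$ is $(p,q)$-regularly nil clean.

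I do not anticipate a genuine obstacle here. The Corollary already tells us that what matters is the scalar $m+n$ rather than the pair $(m,n)$; once one has the idea of feeding $b^{s}$, rather than $b$, into the hypothesis, both required conditions on $f$ follow mechanically: the membership from the trivial inclusion $b^{s(m+n)}R \subseteq b^{s}R$ granted by $m+n \geq 1$, and the nilpotency from the centrality of $1-e$, which lets every power of $b^{s}(1-e)$ be folded into a single factor $(1-e)$ times a power of $b$. The only mild subtlety worth spelling out is the use of $(1-e)^{2} = 1-e$ in effecting this collapse.
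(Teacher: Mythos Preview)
Your argument is correct and follows essentially the same route as the paper's own proof: both invoke the preceding Corollary to reduce to the condition on $b^{p+q}$, apply the $(m,n)$-hypothesis to a suitable power of $b$, and then use centrality and idempotence of $1-e$ to collapse $(b^{s}(1-e))^{N}$ to $b^{sN}(1-e)$. Your version is slightly more streamlined in that applying the hypothesis directly to $b^{s}$ (rather than to $b^{k}$ for a carefully chosen $k$) avoids the paper's two-case split on the relative sizes of $m+n$ and $p+q$, and you also correctly isolate the trivial $s=0$ case.
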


\begin{proof}
If, for a moment, $m+n \le  p+q$, then there is $k \in \mathbb{N}$ such that $p+q \le k(m+n)$. For each $a \in R$, there is $e \in (a^k)^{n+m}R \subseteq a^{p+q}R$ such that $a^{k(n+m)}(1-e) \in \text{Nil}(R)$. Therefore, one checks that
\begin{align*}
&(a^{p+q}(1-e))^{k(n+m)}=((a(1-e))^{p+q})^{k(m+n)}=((a(1-e))^{k(m+n)})^{p+q}\\
&=(a^{k(n+m)}(1-e))^{p+q} \in \text{Nil}(R).
\end{align*}
Hence, $a^{p+q}(1-e) \in \text{Nil}(R)$, as needed.

If, however, $p+q \le  m+n$, then, for each $a \in R$, there is $e \in a^{n+m}R \subseteq a^{p+q}R$ such that $a^{n+m}(1-e) \in \text{Nil}(R)$. Therefore,
\begin{align*}
&(a^{p+q}(1-e))^{n+m}=((a(1-e))^{p+q})^{n+m}=((a(1-e))^{n+m})^{p+q}\\
&=(a^{n+m}(1-e))^{p+q} \in \text{Nil}(R).
\end{align*}
Hence, $a^{p+q}(1-e) \in \text{Nil}(R)$, and thus we are set.
\end{proof}

Recall once again for a readers' convenience that a ring $R$ is said to be {\it NI}, provided ${\rm Nil}(R)$ forms an ideal of $R$.

\medskip

The next technical claim considerably extends Lemma~\ref{mn rgular is regular}.

\begin{lemma}
Let $R$ be an NI ring which is $(m,n)$-regularly nil clean for some $m,n\in \mathbb{N}\cup \{0\}$ such that $(m,n)\neq (0,0)$. Then, $R$ is $(p,q)$-regularly nil clean for every $p,q \in \mathbb{N}\cup \{0\}$.
\end{lemma}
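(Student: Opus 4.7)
My plan is to reduce the NI case to the already-settled reduced/abelian case by passing to the factor $\overline{R}:=R/{\rm Nil}(R)$. Since $R$ is NI, the nilradical ${\rm Nil}(R)$ is a two-sided ideal, so $\overline{R}$ is reduced and thus abelian. In addition, ${\rm Nil}(R)$ is a nil ideal of $R$, which is precisely what is needed to lift properties back from $\overline{R}$ to $R$.

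The first substantive step is to verify that $\overline{R}$ inherits the $(m,n)$-regularly nil clean property from $R$. Given $\overline{a}\in\overline{R}$, choose a lift $a\in R$ and the idempotent $e=a^m r a^n\in a^mRa^n$ supplied by the hypothesis, satisfying $a^m(1-e)a^n\in{\rm Nil}(R)$. Reducing modulo ${\rm Nil}(R)$, we obtain $\overline{e}^{\,2}=\overline{e}$, $\overline{e}\in\overline{a}^{\,m}\overline{R}\,\overline{a}^{\,n}$, and $\overline{a}^{\,m}(1-\overline{e})\overline{a}^{\,n}=\overline{0}$, so $\overline{R}$ is $(m,n)$-regularly nil clean. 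Since $\overline{R}$ is also abelian and $(m,n)\neq(0,0)$, the immediately preceding abelian lemma upgrades this to the assertion that $\overline{R}$ is $(p,q)$-regularly nil clean for every $p,q\in\mathbb{N}\cup\{0\}$. Alternatively, one could note that Lemma~\ref{mn rgular is regular} forces $\overline{R}$ to be von Neumann regular (since any reduced $(m,n)$-regularly nil clean ring with $(m,n)\neq(0,0)$ is regular), whence trivially regularly nil clean and then $(p,q)$-regularly nil clean by Proposition~\ref{mn regular}; the trivial case $(p,q)=(0,0)$ is covered by taking $e=1$.

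The final step is to lift the $(p,q)$-property back from $\overline{R}$ to $R$, for which I would invoke \cite[Lemma 2.1]{7} — the same nil-ideal lifting result already used in the proof of Theorem~\ref{2-primal} — applied to the nil ideal $I={\rm Nil}(R)$. No serious obstacle is anticipated: the argument is a sandwich of two routine transfers, the descent of the $(m,n)$-property through the canonical quotient and the ascent of the $(p,q)$-property through a nil ideal, with the substantial content provided by the already-proved abelian/reduced case in between.
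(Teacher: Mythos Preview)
Your argument is correct, but it follows a genuinely different route from the paper's proof. The paper works entirely inside $R$: for $m+n\le p+q$ it chooses $k$ with $p+q\le k(m+n)$, takes the idempotent $e\in (a^k)^{m+n}R\subseteq a^{p+q}R$ furnished by the hypothesis, and then, using only that ${\rm Nil}(R)$ is a two-sided ideal, pushes the relation $a^{k(m+n)}(1-e)\in{\rm Nil}(R)$ through a chain of cyclic rearrangements and right-multiplications by $1-e$ until it becomes $[a^{p+q}(1-e)]^{k'+1}\in{\rm Nil}(R)$, hence $a^{p+q}(1-e)\in{\rm Nil}(R)$; the case $p+q\le m+n$ is analogous. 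No quotient ring and no lifting lemma appear.

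Your ``sandwich'' argument---pass to $\overline{R}=R/{\rm Nil}(R)$, invoke the reduced/abelian case (or Lemma~\ref{mn rgular is regular} plus Proposition~\ref{mn regular}), and lift back via \cite[Lemma~2.1]{7}---is cleaner and shorter, and it makes transparent why the NI hypothesis is precisely what is needed (it guarantees both that the quotient is reduced and that the kernel is a nil ideal suitable for lifting). The paper's approach, by contrast, is self-contained and avoids the external lifting lemma, illustrating the elementwise manipulation directly; the authors themselves note in the remark following the lemma that more indirect proofs are available but that they ``gave more direct proofs.'' Your route is essentially the one they allude to.
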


\begin{proof}
Firstly, if $m+n \le p+q$, then there is $k \in \mathbb{N}$ such that $p+q \le k(n+m)$. One inspects that, for each $a \in R$, there is $$e=e^2 \in  (a^k)^{n+m}R \subseteq a^{p+q}R, \text{ such that } (a^k)^{n+m}(1-e) \in {\rm Nil}(R).$$ Letting $x$ be a remainder of $k(n+m)$ modulo $p+q$ ($0\leq x<p+q$), there will exist $k' \in \mathbb{N}$ such that $$a^{k'(p+q)}a^x(1-e) \in {\rm Nil}(R).$$ Therefore, $$a^{k'(p+q)}(1-e)a^x \in {\rm Nil}(R).$$ But, as $R$ is an NI ring, we find that $$a^{k'(p+q)}(1-e)a^{p+q} \in {\rm Nil}(R).$$
Furthermore, using the same property of $R$, namely that ${\rm Nil}(R)$ is an ideal of $R$, we subsequently obtain that

\begin{align*}
    &a^{k'(p+q)}(1-e)a^{p+q} \in {\rm Nil}(R)  \Rightarrow a^{k'(p+q)}(1-e)a^{p+q} (1-e) \in {\rm Nil}(R) \\
    &a^{(k'-1)(p+q)}a^{p+q}(1-e)a^{p+q}(1-e)\in {\rm Nil}(R) \Rightarrow  a^{(k'-1)(p+q)}[a^{p+q}(1-e)]^2 \in {\rm Nil}(R) \\
    &\Rightarrow \cdots \Rightarrow  [a^{p+q}(1-e)]^{k'+1} \in {\rm Nil}(R) \Rightarrow  a^{p+q}(1-e) \in {\rm Nil}(R).
\end{align*}

In fact, a critical step is that $$a^{(k'-1)(p+q)}[a^{p+q}(1-e)]^2 \in {\rm Nil}(R) = a^{p+q}a^{(k'-2)(p+q)}[a^{p+q}(1-e)]^2 \in {\rm Nil}(R)$$ yields $a^{(k'-2)(p+q)}[a^{p+q}(1-e)]^2a^{p+q} \in {\rm Nil}(R)$ and thus, as above illustrated, multiplying both sides from the right by $1-e$ and proceeding by decreasing the value of $k'$, we can get the final result, as pursued.

Hence, one concludes also that $(1-e)a^{p+q} \in {\rm Nil}(R)$, as expected.

\medskip

Reciprocally, if $p+q \le n+m$, similarly to the demonstrated above tricks, one infers that $a^{p+q}(1-e) \in {\rm Nil}(R)$, as asked for.
\end{proof}

\begin{remark} In view of Proposition~\ref{mn regular}, it is conceptually enough to prove in the previous two lemmas only that $R$ is a regularly nil clean ring. We, however, gave more direct proofs.
\end{remark}

As three consequences, we obtain:

\begin{corollary}
Let $R$ be a 2-primal $(m,n)$-regularly nil clean ring for some $m,n\in \mathbb{N}\cup \{0\}$ such that $(m,n)\neq (0,0)$. Then, for every $k \in \mathbb{N}$, ${\rm M}_k(R)$ is a regularly nil clean ring.
\end{corollary}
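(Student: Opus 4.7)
The plan is to deduce this corollary almost immediately from Theorem~\ref{2-primal}. That theorem already guarantees, under the exact hypotheses in force here, that ${\rm M}_k(R)$ is $(p,q)$-regularly nil clean for every $p,q \in \mathbb{N} \cup \{0\}$ and every $k \in \mathbb{N}$. So I would invoke it with the specific choice $(p,q)=(0,1)$ and then translate across to the ordinary regularly nil clean definition.

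The substantive observation is that being \emph{regularly nil clean} is the same as being \emph{$(0,1)$-regularly nil clean}. Unwinding Definition~\ref{def 1}(3) at $(p,q)=(0,1)$ says: for each $A \in {\rm M}_k(R)$ there exists an idempotent
$$E \in A^{0}\,{\rm M}_k(R)\,A = {\rm M}_k(R)\,A \quad \text{with} \quad A^{0}(I-E)A = (I-E)A \in {\rm Nil}({\rm M}_k(R)).$$
Comparing with Definition~\ref{def 1}(1), the only discrepancy is whether nilpotency is imposed on $(I-E)A$ or on $A(I-E)$; but the identity $\bigl(A(I-E)\bigr)^{n+1} = A\bigl((I-E)A\bigr)^{n}(I-E)$ shows these two conditions are equivalent, a fact already flagged parenthetically inside both Definition~\ref{def 1}(1) and Definition~\ref{def 1}(3). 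Hence the $(0,1)$-regularly nil clean property produced by Theorem~\ref{2-primal} is literally the regularly nil clean property required by the corollary.

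There is essentially no obstacle, because the serious work—reducing modulo ${\rm Nil}_*(R)$, using regularity of ${\rm M}_k(R/{\rm Nil}_{*}(R))$, and the nil-lift via \cite[Lemma~2.1]{7}—was packaged into Theorem~\ref{2-primal}. The only item to be careful with is the choice of side: the alternative specialization $(p,q)=(1,0)$ would deliver $E \in A\,{\rm M}_k(R)$ instead of ${\rm M}_k(R)\,A$, which does not match Definition~\ref{def 1}(1) verbatim, even though the left-right symmetry proposition at the start of Section~2 makes this a harmless cosmetic distinction. Taking $(p,q)=(0,1)$ avoids even that detour.
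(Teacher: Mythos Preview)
Your proposal is correct and matches the paper's intended argument: the corollary is stated without proof precisely because it is the specialization of Theorem~\ref{2-primal} to a pair $(p,q)$ that recovers the ordinary regularly nil clean definition, and your choice $(p,q)=(0,1)$ together with the observation that $(1-e)a \in {\rm Nil}(R) \Leftrightarrow a(1-e) \in {\rm Nil}(R)$ does this cleanly.
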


\begin{corollary}
Let $R$ be an abelian $(m,n)$-regularly nil clean ring for some $m,n\in \mathbb{N}\cup \{0\}$ such that $(m,n)\neq (0,0)$. Then, $R$ is regularly nil clean.
\end{corollary}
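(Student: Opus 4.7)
The plan is to derive this result as an immediate consequence of the preceding lemma together with the abelian hypothesis. The preceding lemma already upgrades $(m,n)$-regular nil cleanness (for any $(m,n) \neq (0,0)$) to $(p,q)$-regular nil cleanness for \emph{every} $(p,q) \in \mathbb{N} \cup \{0\}$, so I would specialize to the single exponent pair that matches the regularly-nil-clean definition most directly, namely $(p,q) = (0,1)$.

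Under that specialization, for each $a \in R$ one obtains an idempotent $e \in \mathrm{Id}(R) \cap a^0 R a^1 = \mathrm{Id}(R) \cap Ra$ such that $a^0(1-e)a^1 = (1-e)a \in \mathrm{Nil}(R)$. The idempotent $e$ already lies in $Ra$, which is exactly the membership condition demanded by Definition~\ref{def 1}(1). The remaining task is to convert the witness $(1-e)a$ into the form $a(1-e)$ required by the regularly-nil-clean definition. This is precisely where I would invoke abelianness: since $e$ is central, $a(1-e) = a - ae = a - ea = (1-e)a \in \mathrm{Nil}(R)$, and the conclusion follows at once.

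There is essentially no obstacle here, since the heavy lifting was done in the previous lemma. The only conceptual point worth flagging is the choice of exponent pair: picking $(0,1)$ rather than $(1,0)$ places $e$ directly in $Ra$ as the definition requires, so abelianness is needed only to swap the order of the factors in the nilpotent witness and not to move the idempotent between the principal ideals $aR$ and $Ra$.
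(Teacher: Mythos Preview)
Your argument is correct and is exactly the intended route: the paper states this corollary without proof immediately after the abelian lemma, and specializing that lemma to $(p,q)=(0,1)$ is precisely how one recovers the regularly nil clean condition. One small remark: your final appeal to abelianness is actually unnecessary, since for any elements $x,y$ in any ring one has $xy\in{\rm Nil}(R)\Leftrightarrow yx\in{\rm Nil}(R)$, so $(1-e)a\in{\rm Nil}(R)$ already forces $a(1-e)\in{\rm Nil}(R)$ without using centrality of $e$ (this is the parenthetical ``and hence'' clause in Definition~\ref{def 1}(1)).
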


\begin{corollary}
Let $R$ be an NI $(m,n)$-regularly nil clean ring for some $m,n\in \mathbb{N}\cup \{0\}$ such that $(m,n)\neq (0,0)$. Then, $R$ is regularly nil clean.
\end{corollary}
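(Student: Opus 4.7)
The plan is to obtain this corollary as an immediate specialization of the NI lemma proved just above, combined with the equivalence corollary from earlier in Section~2. First, I would apply the preceding lemma to the given $R$: since $R$ is NI and $(m,n)$-regularly nil clean with $(m,n)\neq(0,0)$, the lemma yields that $R$ is already $(p,q)$-regularly nil clean for \emph{every} pair $p,q\in\mathbb{N}\cup\{0\}$.

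Next, I would specialize to $(p,q)=(1,0)$, so $p+q=1$. Invoking item (4) of the equivalence corollary, being $(1,0)$-regularly nil clean is precisely the statement that $a^{1}=a$ is a regularly nil clean element for every $a\in R$. Ranging $a$ over $R$ gives exactly the definition of $R$ being regularly nil clean, completing the argument.

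The main obstacle does not lie in this corollary itself but already in the NI lemma it rests on; that lemma used crucially the hypothesis that $\mathrm{Nil}(R)$ is a two-sided ideal in order to reshuffle factors such as $a^{k'(p+q)}(1-e)a^x$ and to iteratively multiply by $(1-e)$ and suitable powers of $a$ while remaining inside $\mathrm{Nil}(R)$. As the remark above the three corollaries notes, one could alternatively deduce the present statement by first proving \emph{directly} that NI $(m,n)$-regularly nil clean rings are regularly nil clean, and then recovering the full lemma from Proposition~\ref{mn regular}; but the shortest route is to simply specialize, as above.
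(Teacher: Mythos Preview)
Your proposal is correct and matches the paper's intended argument: the corollary is stated without proof precisely because it is the specialization of the preceding NI lemma to $(p,q)$ with $p+q=1$, together with the equivalence corollary that identifies $(1,0)$- (or $(0,1)$-) regularly nil clean with the ring being regularly nil clean. Your remark about the alternative route via Proposition~\ref{mn regular} also mirrors the paper's own Remark preceding the three corollaries.
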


We now continue our work on another vein. In fact, it was asked in \cite{5} whether Utumi's rings possess the left-right symmetric property. It was shown in \cite[Theorem 2.13]{8} that the answer to this question is positive. Nevertheless, in the next technical claim, we shall show in a different and more transparent way that this is really true.

\begin{lemma}
For any ring R, the following are equivalent:

\begin{enumerate}

\item[\textsc{1.}] $R$ is an Utumi ring.

\item[\textsc{2.}] For every $x \in R$, there exists $y \in R$ such that $x-x^2y \in {\rm Nil}(R)$.

\item[\textsc{3.}] For every $x \in R$, there exists $y \in R$ such that $x-xyx \in {\rm Nil}(R)$.

\item[\textsc{4.}] For every $x \in R$, there exists $y \in R$ such that $x-yx^2 \in {\rm Nil}(R)$.

\end{enumerate}
\end{lemma}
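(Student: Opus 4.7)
The plan is as follows. Conditions (1) and (2) coincide by the very definition of an Utumi element, so the substantive content is to establish (2) $\Leftrightarrow$ (3) $\Leftrightarrow$ (4). My strategy is to reduce everything to one elementary and well-known fact: for arbitrary $a,b$ in any ring, $ab \in {\rm Nil}(R)$ if and only if $ba \in {\rm Nil}(R)$, because $(ab)^{k}=0$ implies $(ba)^{k+1}=b(ab)^{k}a=0$. This identity requires no hypothesis whatsoever on ${\rm Nil}(R)$, not even closure under addition or multiplication, which is what makes the argument ``transparent'' in the sense advertised above.

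Having this tool in hand, I would exploit the four factorizations
\begin{align*}
  x - x^{2}y &= x(1-xy), & x - xyx &= (1-xy)x, \\
  x - xyx   &= x(1-yx), & x - yx^{2} &= (1-yx)x.
\end{align*}
Applying the swap-of-factors fact with $a=x$ and $b=1-xy$ shows that $x-x^{2}y$ is nilpotent if and only if $x-xyx$ is, which yields (2) $\Leftrightarrow$ (3); applying it with $a=x$ and $b=1-yx$ shows that $x-xyx$ is nilpotent if and only if $x-yx^{2}$ is, which yields (3) $\Leftrightarrow$ (4). Note that the \emph{same} witness $y$ works in both directions of each equivalence, so one even gets the stronger pointwise assertion that, for any fixed pair $(x,y)$, the three expressions $x-x^{2}y$, $x-xyx$, $x-yx^{2}$ are either all nilpotent or none of them is.

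I anticipate no serious obstacle: the whole proof reduces to the one-line identity $(ba)^{k+1}=b(ab)^{k}a$ together with four trivial factorizations. The only minor point to be careful about is not to inadvertently appeal to any closure property of ${\rm Nil}(R)$ (such as NI-ness), since the lemma must hold in an arbitrary ring. This is precisely why the chosen approach, based solely on the $ab \leftrightarrow ba$ swap, is preferable to a direct computational attack, which would be forced to handle the commutator $x^{2}y-xyx$ that obstructs a naive passage from (2) to (3) or back.
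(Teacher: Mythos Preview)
Your proposal is correct and follows essentially the same route as the paper's own proof: both use the single fact that $ab\in{\rm Nil}(R)\iff ba\in{\rm Nil}(R)$ together with the factorizations $x-x^{2}y=x(1-xy)$, $x-xyx=(1-xy)x=x(1-yx)$, and $x-yx^{2}=(1-yx)x$ to chain the equivalences $(2)\Leftrightarrow(3)\Leftrightarrow(4)$, with $(1)\Leftrightarrow(2)$ being definitional. Your observation that the same witness $y$ serves throughout is a nice explicit remark that the paper leaves implicit.
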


\begin{proof}
$(1)\Leftrightarrow (2)$ is clear from the definition. It is now enough to show that the implications $(2)\Rightarrow(3)\Rightarrow(4)\Rightarrow(2)$ are valid. To that goal, for any two elements $x, y$ of a ring $R$, it follows that $xy \in {\rm Nil}(R)$ if, only if, $yx \in {\rm Nil}(R)$, whence

\begin{align*}
x-x^2y \in {\rm Nil}(R) & \Leftrightarrow  x(1-xy) \in {\rm Nil}(R)  \Leftrightarrow  (1-xy)x \in {\rm Nil}(R) \\
       & \Leftrightarrow \,\,  x-xyx \in {\rm Nil}(R) \,\,  \Leftrightarrow  x(1-yx) \in {\rm Nil}(R) \\
       & \Leftrightarrow  (1-yx)x \in {\rm Nil}(R)  \Leftrightarrow \:\: x-yx^2 \in {\rm Nil}(R).
\end{align*}
This automatically completes the proof.
\end{proof}

Besides, it was asked in \cite[Problem 3.3]{5} whether centrally Utumi's rings are exchange. We now will concentrate on this and will prove even something more, namely that each centrally Utumi's ring is strongly $\pi$-regular, and since it was demonstrated in \cite[Example 23]{22} that strongly $\pi$-regular rings are exchange, we will be done.

\begin{theorem}\label{utumi}
Any centrally Utimi ring is a strongly $\pi$-regular ring.
\end{theorem}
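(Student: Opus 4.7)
The plan is to exhibit, for every $r \in R$, a positive integer $N$ with $r^N \in r^{N+1}R$; the dual inclusion $r^N \in Rr^{N+1}$ then follows automatically from the left-right symmetry of strong $\pi$-regularity proved by Dischinger and cited in the introduction.

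Fix $r \in R$ and use the centrally Utumi hypothesis to pick $x \in R$ together with $c := r - r^2x \in {\rm Nil}(R) \cap C(R)$. Choose $k \in \mathbb{N}$ with $c^k = 0$. Since $c$ lies in the center, the element $r^2x = r - c$ commutes with $r$ (indeed with every power of $r$), and $c$ itself commutes with $r^2x$. A short induction on $j$, moving copies of $r$ past $r^2x$, establishes the identity $(r^2x)^j = r^{2j}x^j$ for every $j \geq 1$: at the inductive step,
$$r^2x\cdot(r^2x)^j = r^2x\cdot r^{2j}x^j = r^{2j}\cdot r^2x\cdot x^j = r^{2j+2}x^{j+1},$$
where the middle equality uses that $r^2x$ commutes with every power of $r$.

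Set $N := 2k-1$. Because $r^2x$ and $c$ commute, the binomial theorem applies to $r = r^2x + c$ and gives
$$r^N = \sum_{j=0}^{N}\binom{N}{j}(r^2x)^j c^{N-j} = \sum_{j=k}^{N}\binom{N}{j}r^{2j}x^j c^{N-j},$$
the terms with $j < k$ vanishing because then $N-j \geq k$ and $c^k = 0$. For each surviving index $j$ we have $2j \geq 2k = N+1$, so $r^{2j} = r^{N+1}r^{2j-N-1}$, and we may factor $r^{N+1}$ out on the left to conclude
$$r^N = r^{N+1}\left(\sum_{j=k}^{N}\binom{N}{j}r^{2j-N-1}x^j c^{N-j}\right) \in r^{N+1}R.$$
Thus $R$ is right $\pi$-regular in Azumaya's sense, and Dischinger's symmetry theorem upgrades this to strong $\pi$-regularity.

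The pivotal point is the centrality of $c$: it is precisely what makes $r^2x = r - c$ commute with $r$ (so that the clean formula $(r^2x)^j = r^{2j}x^j$ holds) and what simultaneously legitimizes the binomial expansion of $(r^2x + c)^N$. If one only knew $c \in {\rm Nil}(R)$ (as in the plain Utumi or even strongly Utumi hypothesis), neither step would go through, which is why the argument really exploits the stronger assumption; this is the one subtlety of the proof, the remainder being routine bookkeeping with the binomial sum.
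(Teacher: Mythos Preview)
Your proof is correct and follows essentially the same strategy as the paper: both arguments exploit the centrality of $c=r-r^2x$ to move copies of $r$ through products, then apply a binomial expansion and factor out a high power of $r$ on the left. The only cosmetic difference is that the paper establishes $(r-r^2x)^n=r^n(1-rx)^n$ and expands $(1-rx)^m$, obtaining the slightly sharper conclusion $r^m\in r^{m+1}R$ (with $m$ the nilpotency index of $c$), whereas you expand $r^{2k-1}=(r^2x+c)^{2k-1}$ via your identity $(r^2x)^j=r^{2j}x^j$; the underlying mechanism is the same.
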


\begin{proof}
Let $R$ be a centrally Utimi ring and let $x \in R$ be an arbitrary element. Hence, there is $y \in R$ such that $x-x^2y \in {\rm Nil}(R)\cap C(R)$. Therefore, for each $n \in \mathbb{N}$, we derive:
\begin{align*}
(x - x^2y)^n &= (x - x^2y)^{n-1}(x - x^2y) = (x - x^2y)^{n-1}x(1 - xy)\\
&= x(x - x^2y)^{n-1}(1 - xy)= x(x - x^2y)^{n-2}(x - x^2y)(1 - xy) \\
&= x(x - x^2y)^{n-2}x(1 - xy)^2 = x^2(x - x^2y)^{n-2}(1 - xy)^2\\
&= \cdots = x^n(1 - xy)^n.
\end{align*}

On the other hand, there is $m \in \mathbb{N}$ such that $(x-x^2y)^{m} = 0$. So, $x^{m}(1-xy)^{m} = 0$ and expanding this by the classical Newton's binomial formula, we deduce that

\begin{align*}
0 & = x^m(1 - xy)^m = x^m \sum_{i=0}^{m}(-1)^{i}\binom{m}{i}(xy)^{i} \\
    & =  x^m (1+ \sum_{i=1}^{m}(-1)^{i}\binom{m}{i}x(yx)^{i-1}y ) =  x^m + \sum_{i=1}^{m}(-1)^{i}\binom{m}{i}x^{m+1}(yx)^{i-1}y,
\end{align*}
which gives that

$$x^{m} = k_1x^{m+1}y+\cdots + k_{m}x^{m+1}(yx)^{m-1}y,$$
where we put $k_i = (-1)^{i+1}\binom{m}{i}\in \mathbb{N}$ whenever $i = 1,\ldots , m$. Finally, one detects that $x^{m} = x^{m+1}r$, where $r \in R$. This shows that the ring $R$ is strongly $\pi$-regular, as promised.
\end{proof}

Meanwhile, in order to illustrate that the reverse implication is untrue, it is easy to see that the ring $\mathbb{Z}_{(2)}$ consisting of all rational numbers with odd denominators is a commutative (local) exchange ring which is, manifestly, not centrally Utimi.

\medskip

The next construction is worthy of documentation.

\begin{exam}
For any ring $R$, both the polynomial ring  $R[x]$ and the power series ring $R[[x]]$ are {\it not} $(m,n)$-regularly nil clean for every $m,n \in \mathbb{N}$.
\end{exam}

\begin{proof}
If we assume, by way of a contradiction, that the ring $R[[x]]$ is $(m,n)$-regularly nil clean, then according to \cite[Exercise 5.6]{18} we know that $$J(R[[x]]) = \{a + xf(x) | a \in J(R) \: \textrm{and} \: f(x) \in R[[x]]\}.$$ So, $x \in J(R[[x]])$. Therefore, $J(R[[x]])$ is not nil, but by \cite[Lemma 2.1]{7}, $J(R[[x]])$ is known to be nil. This is, indeed, a contradiction, as required.

Now, assume, in a way of a contradiction, that $R[x]$ is $(m,n)$-regularly nil clean. Then, there exists $e \in x^mR[x]x^n \cap Id(R[x])$ with $x^{n+m}(1-e) \in {\rm Nil}(R[x])$. Consequently, there is $f \in R[x]$ such that $e=fx^{m+n}$. With a simple calculation at hand, we verify that $e=0$. But, one checks that $$x^{n+m}(1-e)=x^{n+m} \notin {\rm Nil}(R[x]),$$ which is the desired contrary to our assumption.
\end{proof}

\section{The relationship between regularly nil clean and D-regularly nil clean rings}

According to the corresponding definitions alluded to above, every D-regularly nil clean is a regularly nil clean ring. In this section, we examine under what additional conditions and circumstances the opposite is true, that is, D-regularly nil clean rings and regularly nil clean rings do coincide.

\medskip

However, first and foremost, we provide some basic examples.

\begin{exam}\label{e3.1}  $\newline$
\begin{enumerate}

\item[\textsc{1.}] Each invertible element in a ring is D-regularly nil clean. In fact, if $a$ is invertible in $R$, we can write $e=1=aa^{-2}a \in aRa$. In particular, every division ring is D-regularly nil clean. \label{e3.1.1}

\item[\textsc{2.}] Each idempotent element in a ring is D-regularly nil clean. In fact, if $a$ is an idempotent in $R$, we can write $e=a=aa \in aRa$. In particular, any Boolean ring is D-regularly nil clean.

\item[\textsc{3.}] Each nilpotent element in a ring is D-regularly nil clean. In fact, if $a$ is a nilpotent in $R$ and $a^n=0$ for some $n\in \mathbb{N}$, we can write $e=0=aa^{n-2}a \in aRa$. \label{e3.1.3}

\item[\textsc{4.}] All D-regularly nil clean rings are closed under finite direct products. \label{e3.1.4}

\end{enumerate}
\end{exam}

With the first and third parts of the above example at hand, we can conclude that $\mathbb{Z}_{p^{n}}$ is a D-regularly nil clean ring for every prime number $p$ and positive integer $n$. Also, from the fourth part, we have that $\mathbb{Z}_{m}$ is a D-regularly nil clean ring for every positive integer $m$.

\medskip

Besides, it was asked in \cite[Problem 3.1]{8} whether an infinite direct product of (strongly) $\pi$-regular rings is a D-regularly nil clean ring. Owing to the next result, we answer this question in the negative.

\begin{exam}
An infinite product of (strongly) $\pi$-regular rings is {\it not} D-regularly nil clean. Even more, we can construct an infinite direct product of D-regularly nil clean rings that is {\it not} D-regularly nil clean itself. Indeed, consider $$R =\mathbb{Z}_2 \times \mathbb{Z}_4 \times \mathbb{Z}_8 \times \cdots .$$ We claim that the element $a=(0,2,2,2,\ldots) \in J(R)\subseteq R$ is {\it not} D-regularly nil clean as for otherwise it must be that $e \in aRa \subseteq J(R)$. So, it immediately follows that $e = 0$. But, this forces that $a(1-e)=a \notin {\rm Nil}(R)$, as asked for.
\end{exam}

We now have all the ingredients necessary to prove the following.

\begin{theorem}\label{Th3.3}
Let $R$ be an NI ring. Then, $R$ is regularly nil clean if, and only if, $R$ is D-regularly nil clean.
\end{theorem}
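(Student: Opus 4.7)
The direction $(\Leftarrow)$ is immediate, since $aRa\subseteq Ra$: any witness for D-regular nil cleanness is already a witness for regular nil cleanness. For the converse, the plan is to exploit the NI hypothesis by passing to $\bar R:=R/\mathrm{Nil}(R)$, which is reduced and hence abelian (every idempotent being central), to construct a suitable idempotent inside $\bar a\bar R\bar a$ in $\bar R$, and then to lift it back to a genuine idempotent inside $aRa$.

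Starting from $a\in R$ and a witness $e=ra\in\mathrm{Id}(R)\cap Ra$ for regular nil cleanness, I first reduce modulo $\mathrm{Nil}(R)$: the condition $a(1-e)\in\mathrm{Nil}(R)$ gives $\bar a\bar r\bar a=\bar a$ in $\bar R$, and because $\bar e=\bar r\bar a$ is a central idempotent, a short manipulation (using also that $\bar a\bar r$ is itself an idempotent, hence central) will yield $\bar r\bar a^{2}=\bar a^{2}\bar r=\bar a$. I then propose $\bar g:=\bar a\bar r^{2}\bar a\in\bar a\bar R\bar a$; the two identities just obtained make the direct verifications $\bar g^{2}=\bar g$ and $\bar a\bar g=\bar a$ routine. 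Pulling back, $g:=ar^{2}a\in aRa$ satisfies $g^{2}-g\in\mathrm{Nil}(R)$ and $a(1-g)\in\mathrm{Nil}(R)$.

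The remaining task is to promote $g$ to a true idempotent $f$ that still lies in $aRa$. For this I plan to use the Newton-style iteration $g_{0}:=g$, $g_{k+1}:=3g_{k}^{2}-2g_{k}^{3}$. Every $g_{k}$ stays in $aRa$, because this set is closed under multiplication and $\mathbb{Z}$-linear combinations; moreover, a direct calculation gives
\[
g_{k+1}^{2}-g_{k+1}=(g_{k}^{2}-g_{k})^{2}\bigl(4(g_{k}^{2}-g_{k})-3\bigr),
\]
so the nilpotency index of $g_{k}^{2}-g_{k}$ is at least halved at each step; for $k$ large enough, $f:=g_{k}\in aRa$ is a genuine idempotent with $f\equiv g\pmod{\mathrm{Nil}(R)}$. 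Since $\mathrm{Nil}(R)$ is an ideal, $a(1-f)=a(1-g)+a(g-f)\in\mathrm{Nil}(R)$, which completes the verification that $a$ is D-regularly nil clean.

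I expect the only genuine obstacle to be ensuring that the lifted idempotent lies inside $aRa$ rather than merely inside $R$: the standard theorem on lifting idempotents modulo a nil ideal a priori only places $f$ in $R$, whereas the explicit polynomial form of the Newton lift (a polynomial in $g$ with zero constant term) keeps $f$ inside $aRa$.
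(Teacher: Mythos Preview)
Your argument is correct, but the paper reaches the conclusion by a much shorter route. Rather than working in the reduced quotient and then lifting, the paper invokes its earlier Proposition~\ref{mn regular} (regularly nil clean $\Rightarrow$ $(1,1)$-regularly nil clean): applying the regularly-nil-clean hypothesis to $a^{2}$ and massaging the resulting idempotent $e=a^{2}r'\in a^{2}R$ into $f=ar'a$ already produces a \emph{genuine} idempotent $f\in aRa$ with $a(1-f)a\in\mathrm{Nil}(R)$; then, since $R$ is NI, $(a(1-f))^{2}=a(1-f)a\cdot(1-f)\in\mathrm{Nil}(R)$ forces $a(1-f)\in\mathrm{Nil}(R)$, and one is done. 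The difference is that the paper front-loads the work into Proposition~\ref{mn regular}, so no idempotent lifting is ever needed, whereas you apply the hypothesis to $a$ itself, which only yields an idempotent in $Ra$ and therefore requires the detour through $\bar R$ and the Newton iteration to land inside $aRa$. Your approach has the virtue of being self-contained (it does not depend on Proposition~\ref{mn regular}) and makes explicit why the lifted idempotent stays in $aRa$; the paper's approach is cleaner once that proposition is available and avoids the lifting machinery entirely.
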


\begin{proof}
One elementarily sees that, it is enough to show only that every regularly nil clean ring is $D$-regularly nil clean. Invoking Proposition \ref{mn regular}, every regularly nil clean ring is $(1,1)$-regularly nil clean. Letting $a \in R$ be an arbitrary element, there exists $e=e^2 \in aRa$ such that $a(1-e)a \in {\rm Nil}(R)$. Since $R$ is an NI ring, we easily see that $a(1 - e) \in {\rm Nil}(R)$, as required.
\end{proof}

Having the needed machinery at hand, we are now in a position to prove the following.

\begin{theorem}\label{Th3.4}
Let $R$ be an abelian ring. Then, $R$ is regularly nil clean if, and only if, $R$ is D-regularly nil clean.
\end{theorem}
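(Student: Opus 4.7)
The backward implication (D-regularly nil clean $\Rightarrow$ regularly nil clean) is free from any hypothesis, since $aRa \subseteq Ra$, so the witnessing idempotent $e \in Id(R) \cap aRa$ already lies in $Id(R) \cap Ra$. Thus the whole content of the theorem is the converse, and the strategy is to mimic the route used in Theorem~\ref{Th3.3}, substituting the NI hypothesis with the centrality of idempotents.

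Assume $R$ is abelian and regularly nil clean, and fix $a \in R$. By Proposition~\ref{mn regular}, $R$ is $(1,1)$-regularly nil clean, so there exists an idempotent $e \in aRa$ such that
\[
a(1-e)a \in {\rm Nil}(R).
\]
This $e$ is precisely the candidate that would witness D-regular nil cleanness of $a$; what remains is to upgrade the nilpotency of $a(1-e)a$ to that of $a(1-e)$ itself.

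Here the abelian hypothesis enters: since $R$ is abelian, both $e$ and $1-e$ are central. Hence
\[
\bigl(a(1-e)\bigr)^{2} \;=\; a(1-e)\,a(1-e) \;=\; a\cdot a(1-e)(1-e) \;=\; a^{2}(1-e) \;=\; a(1-e)a,
\]
where the last equality again uses centrality of $1-e$. Since $a(1-e)a \in {\rm Nil}(R)$, its square root in the above display shows $(a(1-e))^2$ is nilpotent, and so $a(1-e) \in {\rm Nil}(R)$. Thus $e \in Id(R)\cap aRa$ witnesses that $a$ is D-regularly nil clean, completing the proof.

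I do not anticipate any real obstacle: the argument is essentially three lines once Proposition~\ref{mn regular} is cited, and the only subtlety is the standing observation that nilpotency of $x^{2}$ implies nilpotency of $x$, which is used implicitly in the last step. The analogy with Theorem~\ref{Th3.3} is perfect, with ``${\rm Nil}(R)$ is an ideal'' replaced by ``idempotents are central.''
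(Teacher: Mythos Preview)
Your proof is correct and matches the paper's own argument essentially line for line: both invoke Proposition~\ref{mn regular} to obtain an idempotent $e\in aRa$ with $a(1-e)a\in{\rm Nil}(R)$, and then use centrality of $1-e$ to deduce $(a(1-e))^2\in{\rm Nil}(R)$, hence $a(1-e)\in{\rm Nil}(R)$. Your direct computation $(a(1-e))^2=a(1-e)a$ is a slight streamlining of the paper's chain of implications, but the content is identical.
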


\begin{proof}
As above, it suffices to show only that every regularly nil clean ring is $D$-regularly nil clean. Exploiting Proposition \ref{mn regular}, each regularly nil clean ring is $(1,1)$-regularly nil clean. Therefore, for every $a \in R$, there exists $e=e^2 \in aRa$ such that $a(1-e)a \in {\rm Nil}(R)$. Since $R$ is an abelian ring, we have

\begin{align*}
a(1-e)a \in {\rm Nil}(R) &\Rightarrow  a(1-e)^2a \in {\rm Nil}(R) \Rightarrow  a(1-e)a(1-e) \in {\rm Nil}(R) \\
                       &\Rightarrow  (a(1-e))^2 \in {\rm Nil}(R) \Rightarrow  a(1-e) \in {\rm Nil}(R),
\end{align*}
as needed.
\end{proof}

We next proceed by showing the following.

\begin{lemma} \label{lem,chen} \cite[Lemma 1.5]{3}
Let $R$ be a commutative domain and $A \in {\rm M}_2(R)$. Then, $A$ is an idempotent if, and only if, either $A = 0$, or $A = I$, or
$A =
   \begin{pmatrix}
     a & b \\
     c & 1-a
   \end{pmatrix},
$
where $bc = a - a^2$ in $R$.
\end{lemma}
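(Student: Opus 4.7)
The plan is to compute $A^2$ directly and compare with $A$. Write $A = \begin{pmatrix} a & b \\ c & d \end{pmatrix}$ with $a,b,c,d \in R$. The reverse direction is a straightforward verification: $0$ and $I$ are trivially idempotent, and for a matrix of the form $\begin{pmatrix} a & b \\ c & 1-a \end{pmatrix}$ one checks entrywise that $A^2 = A$ using the single relation $bc = a - a^2$. The substance of the lemma lies in the forward direction.

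For the forward direction, equating $A^2$ with $A$ entrywise produces the four equations
\begin{align*}
a^2 + bc &= a, & d^2 + bc &= d, \\
b(a+d-1) &= 0, & c(a+d-1) &= 0.
\end{align*}
Subtracting the two diagonal equations yields $a - d = a^2 - d^2 = (a-d)(a+d)$, hence
\[
(a-d)(a+d-1) = 0.
\]
This is the point at which the hypothesis that $R$ is a commutative domain enters decisively: we conclude that either $a + d = 1$ or $a = d$.

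In the first case, $d = 1-a$, the two off-diagonal equations are satisfied automatically, and the first diagonal equation becomes exactly $bc = a - a^2$, producing the third form in the statement. In the remaining case, $a = d$ and $a + d - 1 \neq 0$, I would invoke the domain hypothesis a second time: from $b(2a-1) = 0 = c(2a-1)$ with $2a-1 \neq 0$ we deduce $b = c = 0$, after which $a^2 = a$ and the domain property force $a \in \{0,1\}$, giving $A = 0$ or $A = I$.

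The only bookkeeping subtlety, which I do not expect to be a real obstacle, is that the two cases $a+d = 1$ and $a = d$ need not be disjoint (they overlap precisely when $2a = 1$); however this overlap sits entirely inside the first case and already lands in the claimed third form, so no separate treatment is needed. Apart from this, the argument is purely a cancellation of the single factor $a+d-1$ in a domain, applied twice.
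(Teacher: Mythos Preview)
Your argument is correct. The paper does not actually supply a proof of this lemma; it merely quotes the result from \cite[Lemma 1.5]{3} and uses it. So there is no ``paper's own proof'' to compare against, and your direct entrywise computation is the standard way to establish the statement. The case split via the factorization $(a-d)(a+d-1)=0$ in the domain $R$, followed by cancellation of $a+d-1$ in the off-diagonal equations, is exactly right, and your remark about the overlap of the two cases (when $2a=1$) is a nice bit of care that shows the three alternatives in the conclusion are exhaustive though not mutually exclusive.
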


Taking into account \cite[Theorem 2.7]{8} and \cite[Theorem 2.10]{8}, regular and $\pi$-regular rings are both D-regularly nil clean. However, it was asked in \cite[Problem 3.3]{8} if there is a ($\pi$-)regular element that is {\it not} D-regularly nil clean.

\medskip

The following example manifestly illustrates that the answer to this question is absolutely positive (for results of a similar aspect, we refer to \cite{H} and \cite{K}, respectively).

\begin{exam}
There is a ($\pi$-)regular element which is {\it not} D-regularly nil clean.
\end{exam}

\begin{proof}
Let $R = {\rm M}_2(\mathbb{Z})$, and take

\begin{center}
$A=
  \begin{pmatrix}
    1 & -1 \\
    3 & -3
  \end{pmatrix} \in R
  $
  \quad \textrm{and} \quad
$B=
  \begin{pmatrix}
    1 & 0 \\
    0 & 0
  \end{pmatrix} \in R
  $
\end{center}

\noindent It is not so hard to inspect that $A=ABA$ and $A^n=(-2)^{n-1}A$ for all $n \in \mathbb{ N}$. If, however, we assume the contrary that $A$ is a D-regularly nil clean element, then there exists an element $E \in ARA \cap Id(R)$ such that $A(I-E) \in {\rm Nil}(R)$. Since $A$ is not nilpotent, we have $E \neq 0$ and write

$$E=\begin{pmatrix}
    1 & -1 \\
    3 & -3
  \end{pmatrix}
  \begin{pmatrix}
    x & y \\
    z & d
  \end{pmatrix}
  \begin{pmatrix}
    1 & -1 \\
    3 & -3
  \end{pmatrix}
  =
  \begin{pmatrix}
    h & -h \\
    3h & -3h
  \end{pmatrix},
$$
where $h=(x-z)+3(y-d) \in \mathbb{Z}$. In accordance with Lemma \ref{lem,chen}, we deduce that $E$ is not idempotent, which is an obvious contradiction with our assumption, thus giving the wanted claim.
\end{proof}

Bearing in mind the above example, one concludes also that a regularly nil clean element is {\it not} D-regularly nil clean.

\medskip

Resuming, the general relationship between regular rings, D-regularly nil clean rings, exchange rings and clean rings is as follows below:

\medskip

\begin{center}
\begin{tikzcd} \label{diagram2}
    &   &  \textrm{clean} \ar[d,Rightarrow] \\
  \textrm{regular} \ar[r,Rightarrow ] & \textrm{D-regularly nil clean} \ar[r,Rightarrow] & \textrm{exchange}

\end{tikzcd}
\end{center}

\medskip

These implications are known to be non-reversible in general. For example, the ring $\mathbb{Z}_4$ is D-regularly nil clean but plainly {\it not} a regular ring. Moreover, it is not too difficult to see that the ring $\mathbb{Z}_{(2)}$ is a (clean) exchange but {\it not} a D-regularly nil clean ring, because $J(R)$ is surely not nil (see \cite[Lemma 2.1]{8}).

\medskip

Likewise, it was asked in \cite[Problem 3.2]{8} whether there is a D-regularly nil clean ring which is {\it not} strongly clean (and even {\it not} clean). The next construction settles this in the affirmative.

\begin{exam}
There is a D-regularly nil clean ring that is neither clean {\it nor} strongly clean.
\end{exam}

\begin{proof} Knowing an example attributed to Bergman, namely \cite[Example 2]{12}, of a regular ring $R$ which is not generated by its units, we find by virtue of \cite[Theorem 2.7]{8} that such an $R$ is D-regularly nil clean. But, $R$ is definitely non-clean consulting with \cite[Proposition 10]{2}; thereby, is is not strongly clean too.

On the other side, in \cite{21} was constructed an example of a unit-regular ring which is {\it not} strongly clean. Therefore, thanks to \cite[Theorem 2.7]{8}, this ring is necessarily D-regularly nil clean which need {\it not} be strongly clean, as pursued.
\end{proof}

We are now ready to state and prove the following claim.

\begin{lemma} \label{Lemma 3.8}
Let $R$ be an NI ring and $a\in R$. If $a = b + q$, where $b$ is regularly nil clean and $q \in {\rm Nil}(R)$, then $a$ is regularly nil clean.
\end{lemma}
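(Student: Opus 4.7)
The approach is to reduce to the quotient ring $\overline R := R/{\rm Nil}(R)$ (which makes sense because $R$ is NI), show that $\overline a$ is a regular element of $\overline R$, and then lift a witnessing idempotent back to one that actually lies in $Ra$ by the classical Newton-type procedure.

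First, since $a - b = q \in {\rm Nil}(R)$, we have $\overline a = \overline b$ in $\overline R$. By hypothesis $b$ admits an idempotent $e = rb \in Rb$ with $b(1-e) \in {\rm Nil}(R)$; reducing modulo ${\rm Nil}(R)$ gives $\overline b = \overline b\,\overline e = \overline b\,\overline r\,\overline b$, so $\overline b$, and therefore $\overline a$, is a regular element of $\overline R$. Pick $s \in R$ with $\overline a = \overline a\,\overline s\,\overline a$ and set $f := sa \in Ra$. A direct check gives $\overline f^{\,2} = \overline f$, hence $h := f^2 - f \in {\rm Nil}(R)$.

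Second, I would apply the Newton iteration $f_0 := f$, $f_{j+1} := 3f_j^{2} - 2f_j^{3}$. Working in the commutative subring $\mathbb Z[f]$ and using $f^2 = f + h$ together with $fh = hf$, a short calculation gives $f_1^{2} - f_1 = h^{2}(4h-3)$, and inductively the nilpotency index of $f_j^{2} - f_j$ is at least halved at each step; after finitely many iterations we reach an idempotent $e' := f_J$. Crucially, each $f_j$ is a polynomial in $f$ with no constant term (this is visibly true for $f_1 = 3f^2 - 2f^3$ and is preserved by the iteration polynomial $3t^2-2t^3$), so $e'$ is a $\mathbb Z$-linear combination of monomials $(sa)^k$ with $k \geq 1$; every such monomial ends in $a$, whence $e' \in Ra$.

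Finally, reducing $a(1-e')$ modulo ${\rm Nil}(R)$ yields $\overline a(1 - \overline f) = \overline a - \overline a\,\overline s\,\overline a = 0$, so $a(1-e') \in {\rm Nil}(R)$. Combined with $e' \in Id(R) \cap Ra$, this shows that $a$ is regularly nil clean. The main obstacle is precisely this lifting step: the classical idempotent-lifting theorem modulo a nil ideal delivers an idempotent somewhere in $R$, whereas we need one that sits inside $Ra$. The fix is to start Newton's iteration from $f = sa \in Ra$ and to observe that the iteration polynomial has no constant term, so the entire Newton orbit stays inside the $\mathbb Z$-span of $\{f, f^2, f^3, \ldots\} \subseteq Ra$.
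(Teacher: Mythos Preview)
Your proof is correct and follows essentially the same route as the paper's: both reduce modulo the nil ideal ${\rm Nil}(R)$, observe that the image of $a$ has an associated idempotent there, and lift that idempotent back into $Ra$ (resp.\ $aR$) so that the complementary factor of $a$ becomes nilpotent. The only real difference is packaging: the paper invokes \cite[Theorem~21.28]{17} directly to lift $\bar g=\overline{ar}$ to an idempotent $e\in arR$, whereas you unpack that citation by running the Newton iteration $f\mapsto 3f^2-2f^3$ starting from $f=sa$ and noting that the iteration polynomial has no constant term, which keeps the limit inside $Ra$. Your detour through ``$\bar a$ is regular in $\overline R$'' is harmless (indeed $s=r$ already works), and your final reduction $\overline{a(1-e')}=\bar a(1-\bar f)=0$ is exactly the paper's closing step. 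So: same argument, with the lifting lemma made explicit.
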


\begin{proof}
Choose an idempotent $g \in bR$ such that $(1-g)b \in {\rm Nil}(R)$. Write $g=br$ and set $\bar{g}:= br+{\rm Nil}(R)$. Since $a\equiv b$ modulo ${\rm Nil}(R)$, we obtain $\bar{g}= ar+{\rm Nil}(R)\in R/{\rm Nil}(R)$. Now, utilizing \cite[Theorem 21.28]{17}, there will exist an idempotent $e \in arR$ with $\bar{e} = \bar{g}$. But, one readily checks that $(1-e)a\equiv (1-g)b =0$ modulo ${\rm Nil}(R)$, as expected.
\end{proof}

We end up our work with the following difficult question.

\begin{problem} Give an explicit example showing that the classes of $(p,q)$-regularly nil clean rings and  $(m,n)$-regularly nil clean rings are different provided that $m+n\not= p+q$.
\end{problem}

\medskip
\medskip

\noindent{\bf Acknowledgement}. The authors are deeply thankful to the anonymous expert referee for their constructive suggestions made.

\medskip
\medskip

\section*{Funding}

The scientific work of the first-named author (P.V. Danchev) was supported in part by the Junta de Andaluc\'ia under Grant FQM 264.

\vskip2.0pc

\end{document}